\newtheorem{defn}{Definition}[section]
\newtheorem{prop}[defn]{Proposition}
\newtheorem{lem}[defn]{Lemma}
\newtheorem{thm}[defn]{Theorem}
\newtheorem{cor}[defn]{Corollary}
\newtheorem{rem}[defn]{Remark}
\newcommand {\ZZ}{{\mathds Z}}
\newcommand {\XX}{{\mathcal X}}
\newcommand {\CC}{{\mathcal C}}
\newcommand {\Q}{{\mathds Q}}
\newcommand {\UH}{{\mathcal H}}
\newcommand {\M}{{\mathcal M}}
\newcommand {\CP}{{\mathds P}}
\newcommand {\RE}{{{\mathrm E}}}
\newcommand {\TD}{{\tilde{D}}}
\newcommand {\TZ}{{\tilde{Z}}}
\newcommand {\TY}{{\tilde{Y}}}
\newcommand {\CPP}{\CP^1 \times \CP^1}
\newcommand {\TQ}{{\tilde{Q}}}
\newcommand{\tiota}{\tilde{\iota}}
\def\div{\operatorname{div}}
\def\deg{\operatorname{deg}}
\def\Spec{\operatorname{Spec}}
\def\ord{\operatorname{ord}}
\title{Motivic cycles on K3 double covers of del Pezzo surfaces}
\author{Ramesh Sreekantan}
\email{rameshsreekantan@gmail.com/rsreekantan@isibang.ac.in}
\address{Indian Statistical Institute, Bangalore}
\keywords{Motivic cycles, K3 surfaces, del Pezzo surfaces, rational curves}
\begin{document}
	\baselineskip=17pt
	
	\maketitle

	\begin{abstract}

	In this paper we construct indecomposable motivic cycles in the group $H^3_{\M}(\TZ,\Q(2)$ where $\TZ$ is a $K3$ surface obtained as a branched  double cover of a del Pezzo surface. The cycles are constructed using $(-1)$ curves on the del Pezzo and is a generalization of a recent preprint of Sato \cite{satodelpezzo} of the case when the $K3$ is obtained as the fourfold cover of $\CP^2$ branched at a quartic curve.

		{\bf MSC (2000):} 14C25,14G35 14J28,19E15	
		\end{abstract}
	\tableofcontents

	\section{Introduction}
	
	In this  paper we  construct  indecomposable motivic cycles in the group $H^3_{\M}(\TZ,\Q(2))$, where $\TZ$ is a  K3 double cover of a del Pezzo surface,  using $(-1)$ curves. This paper was inspired by the preprint of Sato \cite{satodelpezzo} where he constructs indecomposable motivic cycles in the generic $K3$ surface obtained as a four fold cover of $\CP^2$ ramified at a smooth quartic curve using the $28$ bi-tangents.  This turns out to be a special case of our construction as the double cover of $\CP^2$ ramified at a quartic is a del Pezzo surface of degree $2$ and the $28$ bi-tangents lift to give the $56$ $(-1)$ curves.
	
	In a related paper \cite{sreeK3} we constructed cycles in degree $2$ $K3$ surfaces - which are those obtained as a minimal resolution of a double cover of $\CP^2$ branched at a sextic. In order for those cycles to exist we needed an existence statement from the enumerative geometry of rational curves. The case here is related as the $K3$ double covers of the del Pezzo can be realised as degree $2$ $K3s$. Here the enumerative geometry condition is satisfied by the existence of $(-1)$ curves. At the end of the paper we discuss the more general case of rational curves on del Pezzo surfaces where some results are known thanks to the work of Itzykson and G\"ottsche-Pandharipande \cite{itzy,gopa}.

	\section *{Acknowledgements.} 
	
	I would like to thank Ken Sato for sending me his paper and sharing his ideas about four fold covers of $\CP^2$ branched at  quartics which started me thinking about this question. I would like to thank Microsoft Co-Pilot for helping me find relevant literature and  helping me with the LaTex code for the pictures. I would like to thank the Indian Statistical Institute for their support.

	\section{del Pezzo surfaces}

	All the material in this section can be found in the wonderful book of Dolgachev \cite{dolg}. We found the paper of Comparin, Montero, Prieto–Monta\~{n}ez and Troncoso \cite{CMPT} to be very useful as well.

	Let $X$ be a smooth  algebraic surface over a field $K$ and $K_X$ its canonical bundle. $(X,K_X)$ is called a {\bf del Pezzo  surface} if $-K_X$ is ample. The {\bf degree} of the del Pezzo is $d=K_X^2$. It is known that $1 \leq d \leq 9$.   
	
	There is nice classification of smooth del Pezzo surfaces (\cite{dolg} Proposition 8.1.25.)
	
	\begin{thm} Let $X=X_d$ be a smooth del Pezzo surface of degree $d$. Then 
		\begin{itemize}
			\item If $d=9$, then $X=\CP^2$
			\item If $d=8$, then $X=\CPP$ or the Hirzebruch surface $F_1$, which is $\CP^2$ blown up at a point. 
			\item If $1 \leq d \leq 7$, $X=Bl_{P_1,\dots,P_r}(\CP^2)$, the blow up of $\CP^2$ at $r=9-d$ points in general position. 
		\end{itemize}
		
		Here {\em general position} means no three points lie on a line, no six points lie on a conic and no 8 points lie on a nodal cubic with one of them being the node.
		
	\end{thm}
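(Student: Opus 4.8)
The plan is to follow the classical route (as in \cite{dolg}): prove that $X$ is rational, descend to a minimal model by contracting $(-1)$-curves, classify the minimal del Pezzo surfaces, and finally reinterpret the ampleness of $-K_X$ as the asserted position conditions on the blown-up points. For the first step, since $-K_X$ is ample, Kodaira vanishing applied to $\OO_X\cong K_X\otimes(-K_X)$ gives $H^i(X,\OO_X)=0$ for $i>0$, so $q(X)=0$ and $\chi(\OO_X)=1$; moreover $2K_X$ is not effective (it meets an ample divisor negatively), so $P_2(X)=0$, and Castelnuovo's rationality criterion shows $X$ is rational. Hence $c_2(X)=2+\rho(X)\ge 3$, and Noether's formula $12\chi(\OO_X)=K_X^2+c_2(X)$ gives $d=K_X^2=12-c_2(X)\le 9$, while $d=(-K_X)^2>0$ forces $d\ge 1$.

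\emph{Descent to a minimal model.} If $X$ contains a $(-1)$-curve $E$, let $\pi\colon X\to X'$ be its contraction. Then $K_{X'}^2=K_X^2+1\ge 2>0$; and for an irreducible curve $C\subset X'$ with strict transform $\tilde C$ and $m=\mathrm{mult}_P(C)\ge 0$, using $\pi^*(-K_{X'})=-K_X+E$, $-K_X\cdot E=1$, $E\cdot\tilde C=m$ and $E^2=-1$ one finds $-K_{X'}\cdot C=\pi^*(-K_{X'})\cdot\pi^*C=-K_X\cdot\tilde C+m>0$, so $-K_{X'}$ is ample by Nakai--Moishezon and $X'$ is again del Pezzo. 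Iterating, $X$ is obtained from a minimal del Pezzo surface $X_0$ by $K_{X_0}^2-d$ successive blow-ups.

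\emph{Minimal surfaces and reduction to blow-ups of $\CP^2$.} A minimal rational surface is $\CP^2$ or a Hirzebruch surface $F_n$ with $n\ne 1$; for $n\ge 2$ the negative section $\sigma$ satisfies $-K_{F_n}\cdot\sigma=2-n\le 0$, so such an $F_n$ is not del Pezzo, and therefore $X_0\in\{\CP^2,\CPP\}$. This already yields $d=9$ (only $X=\CP^2$) and, when no blow-up occurs, $d=8$ (only $X=\CPP$); the other $d=8$ case is $X=F_1=\mathrm{Bl}_1(\CP^2)$, coming from $X_0=\CP^2$ with a single blow-up. If $d\le 7$ then at least one blow-up occurs, and if $X_0=\CPP$ we use $\mathrm{Bl}_1(\CPP)\cong\mathrm{Bl}_1(F_1)\cong\mathrm{Bl}_2(\CP^2)$ to replace $X_0$ by $\CP^2$; in all cases $X=\mathrm{Bl}_{P_1,\dots,P_r}(\CP^2)$ with $r=9-d\le 8$.

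\emph{The general position condition.} Write $-K_X\equiv 3H-\sum_{i=1}^r E_i$ in $\Pic(X)$, with $H$ the pullback of a line and the $E_i$ the exceptional classes. If two of the $P_i$ collide or are infinitely near, the class $E_i-E_j$ is effective; a line through $P_i,P_j,P_k$ gives $H-E_i-E_j-E_k$; a conic through six of the points gives $2H-E_{i_1}-\dots-E_{i_6}$; an irreducible cubic nodal at $P_i$ and passing through eight of the points gives $3H-2E_i-E_{j_1}-\dots-E_{j_7}$. Each such class $C$ has $-K_X\cdot C=0$, contradicting ampleness, so the points must be in general position in the stated sense. For the converse one must show $-K_X\cdot C>0$ for every irreducible curve $C$, after which Nakai--Moishezon (with $(-K_X)^2=d>0$) concludes. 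This is the main obstacle: it is equivalent to showing $X$ contains no irreducible curve of self-intersection $\le -2$, and is proved by classifying the $(-1)$- and $(-2)$-classes in $\Pic(X)$ modulo the Weyl group of the root lattice $K_X^{\perp}$ (of type $A_r$, $D_r$ or $E_r$) and checking that effectivity of any of them forces one of the forbidden configurations. This combinatorial analysis is the substantive part of the argument, and we refer to \cite{dolg} for it.
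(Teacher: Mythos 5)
The paper does not prove this statement at all: it is quoted as a known classification and attributed to Dolgachev (\cite{dolg}, Proposition 8.1.25), so there is no internal proof to compare against. Your argument is the standard one from the literature and is essentially correct for the direction actually asserted: rationality via $H^i(X,\OO_X)=0$, $P_2=0$ and Castelnuovo's criterion; the bound $1\le d\le 9$ from Noether's formula; the check that contracting a $(-1)$-curve preserves the del Pezzo condition (your Nakai--Moishezon computation $-K_{X'}\cdot C=-K_X\cdot\tilde C+m>0$ is right); reduction of the minimal model to $\CP^2$ or $\CPP$ and the identification $\mathrm{Bl}_1(\CPP)\cong \mathrm{Bl}_2(\CP^2)$; and the necessity of general position (including the exclusion of infinitely near points) from $-K_X\cdot C=0$ on the classes $E_i-E_j$, $H-E_i-E_j-E_k$, $2H-E_{i_1}-\dots-E_{i_6}$, $3H-2E_i-E_{j_1}-\dots-E_{j_7}$. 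Two remarks: the ``converse'' you single out as the main obstacle (that blowing up points in general position yields a del Pezzo surface) is not part of the statement as formulated, so deferring it to \cite{dolg} does not leave a gap in what you were asked to prove; and your use of Kodaira vanishing and Castelnuovo's criterion tacitly assumes characteristic zero (the paper works over a general field $K$), where one would either restrict to that case or invoke the characteristic-free treatment as in \cite{dolg}, alongside the cited classification of minimal rational surfaces which you also use as a black box.
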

	
	Many classical surfaces are  del Pezzo surfaces. Apart from $\CP^2$, $\CPP$ and $F_1$, cubic hypersurfaces in $\CP^3$ are del Pezzo surfaces of degree 3.

	\subsection{$(-1)$-curves on a del Pezzo surface}

	A $(-1)$ curve is a curve $D$  on $X$ such that $D^2=-1$. Any curve of negative self intersection on a smooth del Pezzo surface is a $(-1)$ curve.  It appears that there are only finitely many of them -- the number depends only on the degree. 
	
	\begin{thm}
		Let $X=Bl_{P_1,\dots,P_r}(\CP^2)$ be a del Pezzo of degree $1 \leq d \leq 8$ which is a blow up of $\CP^2$ at $r=9-d$ points in general position. Let $\pi:X \rightarrow \CP^2$ be the map induced by the blow ups. Then if $D$ is a $(-1)$ curve, the image $\pi(D)$ is one of the following:
		
		\begin{itemize}
			\item One of the points $P_i$ 
			\item A line $L_{ij}$ passing through two of the points $P_i$ and $P_j$. 
			\item A conic $F_{i_1,\dots,i_5}$ passing through five of the points $P_{i_1},\dots,P_{i_5}$. 
			\item A cubic $C_{3,i}(j)$ passing through $7$ of the points except $P_i$ with a node at $P_j$.  
				\item A quartic $Q_4(ijk)$ passing through all $8$ of the points with nodes at $P_i, P_j$ and $P_j$.
			  	\item A quintic $Q_5(ijklmn)$ passing through all the $8$ points with double points at the $6$ points  $P_i,\dots,P_n$. 
			  	\item A sextic 	$S_6(j)$ passing through all the $8$ points with double points at $7$ of them and a triple point at $P_j$. 
			  \end{itemize}

			  The number $n$ of $(-1)$ curves on $X$ is determined by the degree $d$. One has the following table 
			  
			  $$\begin{tabular}{|c||c|c|c|c|c|c|c|c|c|}
			  	\hline
			  	d & 9 & 8 & 7 & 6 & 5 & 4 & 3 &2 & 1 \\
			  	\hline
			  	n & 0 & 1 & 3 & 6 & 10 & 16 & 27 & 56 & 240  \\
			  	\hline
			  \end{tabular}$$
			  
			  \label{minusonecurve}
			  
			  \end{thm}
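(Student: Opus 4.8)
The plan is to work with the Picard lattice of $X = Bl_{P_1,\dots,P_r}(\CP^2)$ and reduce the classification to a purely combinatorial problem about vectors of a fixed norm in that lattice. First I would recall that $\Pic(X)$ is a free $\ZZ$-module with basis $h, e_1,\dots,e_r$, where $h$ is the pullback of the hyperplane class and $e_i$ is the class of the exceptional divisor over $P_i$; the intersection form is diagonal with $h^2 = 1$, $e_i^2 = -1$, $h\cdot e_i = 0$, and the canonical class is $K_X = -3h + e_1 + \cdots + e_r$. A $(-1)$-curve is an irreducible curve $D$ with $D^2 = -1$, and since $D$ is irreducible and effective with $-K_X$ ample, adjunction $D^2 + D\cdot K_X = 2g - 2 = -2$ forces $D\cdot K_X = -1$. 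So I would first show that the classes of $(-1)$-curves are exactly the solutions $D = d_0 h - \sum m_i e_i \in \Pic(X)$ of the two numerical equations $D^2 = d_0^2 - \sum m_i^2 = -1$ and $-D\cdot K_X = 3 d_0 - \sum m_i = 1$, together with effectivity; the converse (that every such numerical class is actually represented by an irreducible $(-1)$-curve) uses Riemann–Roch plus the general-position hypothesis, and this is exactly the step where the del Pezzo condition is essential.

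Next I would solve the Diophantine system $d_0^2 - \sum m_i^2 = -1$, $3d_0 - \sum m_i = 1$ for $r = 9 - d \le 8$. By Cauchy–Schwarz, $\left(\sum m_i\right)^2 \le r \sum m_i^2$, so $(3d_0 - 1)^2 \le r(d_0^2 + 1)$, which bounds $d_0$; a short case analysis in $d_0 = 0,1,2,3,4,5,6$ then produces exactly the list in the statement — $d_0 = 0$ gives the exceptional curves $e_i$ (image $P_i$), $d_0 = 1$ gives the lines $L_{ij}$ through two points, $d_0 = 2$ gives the conics $F_{i_1,\dots,i_5}$, $d_0 = 3$ the nodal cubics $C_{3,i}(j)$, $d_0 = 4$ the triple-nodal quartics $Q_4(ijk)$, $d_0 = 5$ the quintics $Q_5$, and $d_0 = 6$ the sextics $S_6(j)$. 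Identifying the image $\pi(D) \subset \CP^2$ in each case is just a matter of reading off the degree $d_0$ and the multiplicities $m_i$ (a point of multiplicity $m_i$ on the plane curve corresponds to the coefficient $m_i$ of $e_i$).

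For the counting table I would organize by $d_0$ and sum the binomial coefficients: for a given $d = 9 - r$ the total is $\binom{r}{1} + \binom{r}{2} + \binom{r}{5} + r\binom{r-1}{1} + \binom{r}{3} + 6\cdot(\text{choices}) + \binom{r}{1}$ evaluated only for those $d_0$ that actually have solutions with $r$ points, which gives $0,1,3,6,10,16,27,56$ for $d = 9,\dots,2$. The case $d = 1$ ($r = 8$) is the delicate one: here all seven families can occur and one must check there is no overcounting and that the degenerate configurations are excluded by general position, yielding $240$; alternatively I would invoke the well-known identification of the $(-1)$-curve configuration with the root system $E_{9-d}$ (here $E_8$), whose minuscule-type count of relevant vectors is exactly $240$. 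The main obstacle is the converse direction in the first step — proving that each admissible numerical class is represented by an \emph{irreducible} smooth rational curve, which is where one genuinely needs the general-position hypotheses (no three collinear, no six on a conic, no eight on a nodal cubic with a node among them); I would handle this by an induction on $r$, blowing down one $(-1)$-curve at a time and using that the images of the configuration remain in general position, or by directly citing Dolgachev's treatment since the excerpt permits assuming earlier results.
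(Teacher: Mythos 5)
The paper does not prove this theorem at all: it is quoted as a known classical result, with the whole section attributed to Dolgachev's book, so there is no in-paper argument to compare against. Your lattice-theoretic route is the standard textbook proof and its overall structure is sound: adjunction plus $-K_X$ ample gives $D^2=-1$, $D\cdot K_X=-1$; writing $D=d_0h-\sum m_ie_i$ reduces the classification to the Diophantine system $d_0^2-\sum m_i^2=-1$, $3d_0-\sum m_i=1$ with $m_i\ge 0$ (which you should justify by $D\cdot e_i\ge 0$ for $D\ne e_i$ irreducible); and the representability of each numerical class by an irreducible rational curve is indeed the only genuinely geometric step, correctly identified as the place where general position enters (or where one cites Dolgachev).

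Two concrete slips, neither fatal but both worth fixing. First, your Cauchy--Schwarz bound $(3d_0-1)^2\le r(d_0^2+1)$ with $r\le 8$ gives $d_0\le 7$, not $d_0\le 6$; the case $d_0=7$ forces $\sum m_i=20$, $\sum m_i^2=50$ with $r=8$, which is the equality case of Cauchy--Schwarz and would need all $m_i=5/2$, impossible by integrality --- so you must include and kill this case rather than restrict the analysis to $d_0=0,\dots,6$ from the outset. Second, your counting formula is wrong as written for the nodal cubics: the correct term is $7\binom{r}{7}$ (choose the seven points, then the node among them), which gives $7$ for $r=7$ and $56$ for $r=8$, whereas your $r\binom{r-1}{1}=r(r-1)$ gives $42$ for $r=7$ and would make the degree-$2$ total $91$ instead of $56$; similarly the quintic and sextic terms should be $\binom{8}{6}=28$ and $8$, occurring only when $r=8$. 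The totals you quote are correct, so this is an error in the bookkeeping, not in the method. Finally, a terminological point: for $d=1$ the $240$ classes correspond to the \emph{roots} of $E_8$ via $D\mapsto D+K_X$ (since $(D+K_X)^2=-2$ and $(D+K_X)\cdot K_X=0$), not to a minuscule weight orbit; the minuscule descriptions are the right analogues for $27$ and $56$ in degrees $3$ and $2$.
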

			  
			  \subsection{K3 double covers of  del Pezzo surfaces}
			  
			  If $X$ is $\CP^2$ then it is well known that a double cover of $\CP^2$ branched at a smooth sextic $S$ is a $K3$ surface. These are called {\em degree 2} $K3$ surfaces and their moduli was studied in great detail by Shah \cite{shah}. More generally, if $S$ has nodes, the minimal desingularization of the branched double cover is a $K3$ surface. For smooth sextics, outside a countable union of closed subvarieties, the $K3$ surface has Picard number $1$. For the nodal ones, the generic  Picard number is determined by the nodes, as the exceptional divisors are independent elements of the Neron-Severi group. 
			  
			  For example, the Kummer surface of a simple Abelian surface is a special case of this when the sextic is a product of six lines tangent to a conic. Here the generic Picard number is $17$.

			 This construction generalizes to del Pezzo surfaces. A theorem of Alexeev and Nikulin  \cite{alni} states that if $X$ is a del Pezzo of degree $d$ then there is smooth curve $C$ in $|-2K_X|$ such that the double cover branched at $C$ is a $K3$ surface. 
			 
			 In fact it can be seen that if $X$ is a variety with ample anti-canonical bundle then the double cover branched at a divisor in $|-2K_X|$ has trivial canonical bundle. 
			 
			 Let $\TZ \stackrel{\phi}{\longrightarrow} X_d$ be the $K3$ double cover branched at a curve $C$ in $|-2K_X|$. One has 
			 $$-2K_X=6H- \sum_{i=1}^{9-d} 2 E_{P_i}$$
			 in the Picard group of $X$, where $H$ is the pull-back of the hyperplane section under the map to $\CP^2$. Hence $C$ can be considered as the strict transform of a singular sextic $S$ in $\CP^2$ with nodes at the points $P_i$. Therefore $\TZ$ can be thought of as the minimal desingularization of the double cover of $\CP^2$ branched at $S$, which is a $K3$ surface of degree $2$. Let $Z$ be the double cover of $\CP^2$ branched at $S$. This has nodes at the points lying over $P_i$. 
			 
			 We have the following picture:
			 
			\begin{center}
			  
			  \begin{tikzcd}
			  	
			  	\TZ \arrow[d,"\phi_C"] \arrow[r,dashrightarrow,"\tilde{\pi}"]& Z \arrow[d,"\psi_C"]\\
			  	X_d \arrow[r,dashrightarrow,"\pi"] & \CP^2 
			  	
			   \end{tikzcd}
			  
			  \end{center}

			   \section{Motivic cycles} 
			   \subsection{Presentation of cycles in $H^3_{\M}(X,\Q(2))$.}
			   
			   Let $X$ be a surface over a field $K$. The group $H^3_{\M}(X,\Q(2))$ has the following presentation. It is generated by sums of the form 
			   $$\sum (C_i,f_i)$$
			   where $C_i$ are curves on $X$ and $f_i$ functions on them satisfying the co-cycle condition 
			   $$\sum \div(f_i)=0$$
			   Equivalently these can be though of as codimensional  $2$ subvarieties $Z$ of $X \times \CP^1$ such that 
			   $$Z\cdot (X \times \{0\} -X \times \{\infty\})=0$$
			   The cycle given by the sum of the graphs of $f_i$ in $X \times \CP^1$ has this property. 
			   
			   Relations are given by the Tame symbol of a pair of functions. If $f$ and $g$ are two functions on $X$, the  Tame symbol of the pair
			   $$\tau(f,g)=\sum_{Z \in X^1} \left(Z,(-1)^{\ord_g(Z)\ord_f(Z)}\frac{f^{\ord_g(Z)}}{g^{\ord_f(Z)}}\right)$$
			   satisfies the co-cycle condition and  such cycles are  defined to be $0$ is the group. Here $X^1$ is the set of irreducible codimensional one subvarieties. 
			   
			   One example of an element of this group is a cycle of the form $(C,a)$ where $a$ is a nonzero constant function and $C$ is a curve on $X$. This trivially satisfies the cocycle condition. More generally, if $L/K$ is a finite extension and $X_L=X \times_{\Spec(K)} \Spec(L)$ then one has a product map 
			   $$\bigoplus_{L/K} H^2_{\M}(X_L,\Q(1)) \otimes H^1_{\M}(X_L,\Q(1)) \longrightarrow \bigoplus_{L/K} H^3_{\M}(X_L,\Q(2)) \stackrel{Nm_{L:K}}{\longrightarrow} H^3_{\M}(X,\Q(2))$$
			   The image of this is the group of {\em decomposable cyles} $H^3_{\M}(X,\Q(2))_{dec}$. The group of {\em indecomposable cycles} is the quotient group
			   $$H^3_{\M}(X,\Q(2))_{indec}=H^3_{\M}(X,\Q(2))/H^3_{\M}(X,\Q(2))_{dec}$$
			   In general it is not clear how to construct non-trivial elements of this group.  
			   
			   A way of constructing cycles in this group is as follows. Since we use it in the main construction we label it a proposition.

			   \begin{prop}  Let $Q_1$ and $Q_2$  be two  {\em  rational curves} on $X$ which meet at two points $P_1$ and $P_2$. Let $f_{Q_1}$ be a function on $Q_1$ with $\div_{Q_1}(f_{Q_1})=P_1-P_2$ and $f_{Q_2}$ be a function on $Q_2$  with $\div_{Q_2}(f_{Q_2})=P_2-P_1$. Then 
			   	$$(Q_1,f_{Q_1})+(Q_2,f_{Q_2})$$
			   	satisfy the co-cycle condition and hence determine an element of $H^3_{\M}(X,\Q(2))$ 
			   	
			   	\label{construction}
			   	\end{prop}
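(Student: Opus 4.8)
The plan is to check the two things the statement implicitly requires: that functions $f_{Q_1}, f_{Q_2}$ with the prescribed divisors exist, and that $(Q_1,f_{Q_1}) + (Q_2,f_{Q_2})$ satisfies the co-cycle condition $\sum\div(f_i)=0$ recalled above.

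For existence I would invoke the rationality of the $Q_i$. The normalization of a rational curve is $\CP^1$, on which every divisor of degree zero is principal, and $P_1 - P_2$ has degree zero; so one can in fact produce $f_{Q_1}$ explicitly by choosing an identification $Q_1 \cong \CP^1$ with $P_1 \mapsto 0$ and $P_2 \mapsto \infty$ and taking $f_{Q_1}$ to be the affine coordinate, and similarly taking $f_{Q_2}$ to be the reciprocal coordinate on $Q_2$. (If a $Q_i$ is singular one runs the argument on its normalization and pushes forward; in the intended applications $P_1,P_2$ are smooth points of each $Q_i$, so this is harmless.)

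For the co-cycle condition, the point to keep straight is that in $\sum\div(f_i)=0$ the term $\div(f_i)$ denotes the divisor of zeros and poles of $f_i$ on $C_i$, regarded as a zero-cycle on $X$ via proper pushforward along the closed immersion $C_i \hookrightarrow X$. Since $P_1$ and $P_2$ are genuine closed points of $X$, pushing $\div_{Q_1}(f_{Q_1}) = P_1-P_2$ forward along $Q_1 \hookrightarrow X$ yields the zero-cycle $[P_1]-[P_2]$ on $X$, while pushing $\div_{Q_2}(f_{Q_2}) = P_2-P_1$ forward along $Q_2 \hookrightarrow X$ yields $[P_2]-[P_1]$. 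Adding these,
$$\big([P_1]-[P_2]\big)+\big([P_2]-[P_1]\big)=0,$$
so the co-cycle condition holds and the pair determines a class in $H^3_{\M}(X,\Q(2))$.

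There is no serious obstacle here: the only subtlety worth flagging is the sign bookkeeping in the two divisor conditions, which are deliberately opposite — this is exactly what makes the two contributions of $P_1$ and of $P_2$ cancel after pushing forward to $X$, and it is also what forces $Q_1$ and $Q_2$ to pass through the \emph{same} pair of points. The substantive work, namely showing that such classes are nonzero and indecomposable for the curves coming from $(-1)$-curves on the del Pezzo, is separate and lies beyond this statement.
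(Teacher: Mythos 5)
Your proof is correct and matches the paper's (implicit) argument: the paper offers no written proof because the claim follows immediately from the stated presentation of $H^3_{\M}(X,\Q(2))$, the divisors $P_1-P_2$ and $P_2-P_1$ cancelling on the nose, exactly as you verify. Your added remark that rationality of the $Q_i$ guarantees the existence of $f_{Q_1}$ and $f_{Q_2}$ (a degree-zero divisor on $\CP^1$ is principal) correctly identifies the only role of the hypothesis and is a reasonable point to make explicit.
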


			   \vspace{\baselineskip}

			   \begin{center}
			   	
			   	\begin{tikzpicture}
			   		
			   		\draw[name path=conic1,domain=-3:3,smooth,variable=\x,black] plot ({\x*\x},{\x});
			   		
			   		\node[left] at  (0,0) {$Q_1$};
			   		

			   		\draw[name path=conic2,domain=-3:3,smooth,variable=\x,black] plot ({3-\x*\x},{\x});
			   		
			   		\path[name intersections={of=conic1 and conic2,by={A1,A2}}];
			   		
			   		\node [right] at (3,0) {$Q_2$};
			   		\foreach \point in {A1,A2}
			   		\fill (\point) circle (2pt);
			   		\node[above ] at (A1) {$P_2$};
			   		\node[below ] at (A2) {$P_1$};
			   		
			   	\end{tikzpicture}

			   \end{center}
			   \vspace{\baselineskip}
			   
			    A notable special case is when $X$ is the desingularization of a surface $Y$ with isolated nodal points, and $Q$ is a curve which has a node at one of the nodes of the surface. Let $\nu:\tilde{Q}\rightarrow Q$ be the strict transform in the blow up $X$ of $Y$ at $P$.  $\tilde{Q}$ meets the exceptional fibre $\RE_P$ at two points $P_1$ and $P_2$. Both $\tilde{Q}$ and $\RE_P$ are rational curves. Let $f_P$ be the function with $\div(f_P)=P_1-P_2$ on $\tilde{Q}$ and similarly let $g_P$ be the function with $\div(g_P)=P_2-P_1$ on $\RE_P$. Then 
			   $$(\tilde{Q},f_P)+(\RE_P,g_P)$$
			   is an element of $H^3_{\M}(X,\Q(2))$.  

			   This construction was used by Lewis and Chen \cite{chle} to prove the Hodge ${\mathcal D}$-conjecture for certain $K3$ surfaces  and by me \cite{sree2014} to prove an analogue of that in the non-Archimedean case for Abelian surfaces.

			   A priori  it is not clear that if these  cycles are indecomposable or even non-trivial.  We will show that in the case of the cycles we construct from $(-1)$-curves on del Pezzo surfaces it is the case `in general'. For this we need to use the localization sequence in motivic cohomology.

			   \subsection{The localization sequence}
			   \label{localization}
			   
			   If $\XX \rightarrow S$ is a family of varieties over a base $S$ with $X_{\eta}$ the generic fibre and $X_s$ the fibre over a closed subvariety $s$  then one has a localization sequence relating the three. 
			   $$ \cdots \rightarrow H^3_{\M}(X_{\eta},\Q(2) \stackrel{\partial}{\longrightarrow} \bigoplus_{s \in S^1} H^2_{\M}(X_s,\Q(1)) \longrightarrow H^4_{\M}(\XX,\Q(2)) \rightarrow \cdots $$
			   where $S^1$ denotes the set of irreducible closed  subvarieties of codimension $1$. The boundary map of an element 
			   $Z=\sum_i (C_{\eta,i},f_{\eta,i})$ is given as follows. Let $\CC$ denote the closure of $C_{\eta}$ in $\XX$. Then 
			   $$\partial(Z)=\sum_i \div_{\CC_i}(f_{\eta,i})$$
			   The cocycle condition $\sum_i \div_{C_{\eta,i}}(f_{\eta,i})=0$ implies that the `horizontal' divisors cancel out and only the components in some of the  fibres survive. 
			   
			   Let $\XX$ be a family of $K3$ surfaces. To check for indecomposability one can use the boundary map $\partial$. Over the algebraic closure of the base field, a decomposable cycle in $H^3_{\M}(X_{\eta},\Q(2))$ is given by $(C_{\eta},a_{\eta})$ where $C_{\eta}$ is a curve on $X_{\eta}$ and $a_{\eta}$ a constant (that is, a function on the base $S$). 
			   
			   The boundary map is particularly simple to compute in the case of a decomposable element 
			   $$\partial((C_{\eta},a_{\eta}))=\sum \ord_s(a_{\eta}) C_s$$
			   where $C_s$ is the restriction of the closure $\CC$ of $C_{\eta}$ to the fibre over $s$. Hence in the boundary of decomposable elements one can obtain only those cycles in the special fibres which are the restrictions of cycles in the generic fibre. In particular, if $Z_{\eta}$ is a motivic cycle and the boundary contains cycles which are {\em not} the restriction of cycles in the generic fibre then the motivic cycle is necessarily indecomposable. We will show that that is the case for our cycles.
			   
			   An alternate way, as is done in Chen-Lewis \cite{chle}, Ma-Sato \cite{masa} or Sato \cite{sato}, is to compute the Beilinson regulator and show that, when evaluated against a particular transcendental $(1,1)$ form, this is non-zero.  In the case when the base is the ring of integers of a glabal field or a local field, the boundary map can be thought of as a non-Archimedean regulator map and the idea is essentially the same. The transcendental form is represented by a new algebraic cycle in the special fibre.  Evaluation against this form corresponds to intersecting with the new cycle.  That being non-zero implies that the component of the boundary in the direction of the new cycle is non-zero. This also fits in to the philosophy espoused by Manin \cite{mani} that the Archimedean component of an arithmetic variety can be viewed as the `special fibre at $\infty$'.

			   \section{Motivic cycles from $(-1)$-curves}

			   We now return to the special case of $K3$ double covers of del Pezzo surfaces. Recall that there are finitely many $(-1)$ curves on a del Pezzo surface. These are all rational curves. We would like to use Proposition \ref{construction} to obtain cycles on the $K3$ double cover $\TZ$.
			   
			   \subsection{Construction}
			   
			   Let $D$ be a $-1$-curve. From Theorem \ref{minusonecurve} we know  the image of $D$ in $\CP^2$ under $\pi$. 
			   
			   \begin{lem} Let $Q$ be the image of a $(-1)$-curve $D$ under the map $\pi$. Assume $Q$ is not a point. Let $e$ be its degree. The $Q$ meets the branched sextic $S$ at $3e-1$ double points and two other points $t_Q$ and $s_Q$. 
			   \end{lem}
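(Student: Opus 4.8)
The plan is to pull the branch sextic $S$ back to the normalization of $Q$ — which is nothing other than the $(-1)$-curve $D$ itself — and to read off the statement from intersection numbers on $X$, so that essentially no local computation is needed.

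First I would note that $D\cong\CP^1$ (it is a $(-1)$-curve), that $\pi|_D\colon D\to Q$ is a finite birational morphism from a smooth curve and hence \emph{is} the normalization $\nu\colon\widetilde Q\to Q$, and that by Theorem~\ref{minusonecurve} all singularities of $Q$ are ordinary and lie at the blown-up points. Writing $m_i$ for the multiplicity of $Q$ at $P_i$ (with $m_i=0$ if $P_i\notin Q$), the strict transform has class $D=eH-\sum_i m_i E_{P_i}$ in $\Pic(X)$, so $E_{P_i}\cdot D=m_i$; moreover $E_{P_i}$ meets $D$ transversally in $m_i$ distinct points — the strict transforms of the $m_i$ branches of $Q$ at $P_i$ — so $R:=\sum_i(E_{P_i}\cap D)$ is a reduced divisor on $D$.

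Next I would invoke adjunction: as $D$ is a smooth rational curve with $D^2=-1$ we get $K_X\cdot D=-1$, whence, using $K_X=-3H+\sum_i E_{P_i}$, the identity $\sum_i m_i=3e-1$, so that $\deg R=3e-1$; likewise $C\cdot D=-2K_X\cdot D=2$. The one piece of geometry I need is the divisor equality on $X$
$$\pi^{*}S=C+2\sum_i E_{P_i},$$
which holds because $C$ is by construction the strict transform of $S$ and membership $C\in|{-2K_X}|=|6H-2\sum_i E_{P_i}|$ forces $S$ to have multiplicity exactly $2$ at each $P_i$. Restricting to $D$ — a component of neither $C$ nor any $E_{P_i}$ — and using $(\pi^{*}S)|_D=\nu^{*}S$ gives
$$\nu^{*}S=C|_D+2R.$$
For a general branch curve $C$, the degree-$2$ divisor $C|_D$ is $t_Q+s_Q$ with $t_Q\neq s_Q$ and $\{t_Q,s_Q\}$ disjoint from $\Supp R$, so $\nu^{*}S=t_Q+s_Q+2R$ with $R$ reduced of degree $3e-1$. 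This is precisely the claim: on $\widetilde Q=D$ the curve $Q$ has contact of order $2$ with $S$ at the $3e-1$ points over the $P_i$ and meets $S$ transversally at $t_Q$ and $s_Q$ — which downstairs in $\CP^2$ assembles into local intersection $2m_i$ at each $P_i$ and $2\sum_i m_i+2=6e$ altogether, as Bézout demands.

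The forced parts — $\sum_i m_i=3e-1$ and $\nu^{*}S=C|_D+2R$ — are immediate; the step I expect to need the most care is the genericity input, namely that a general $C\in|{-2K_X}|$ cuts out a sextic $S$ with ordinary nodes exactly at the $P_i$ and meeting each of the finitely many relevant curves $Q$ transversally away from the $P_i$ (so that $C|_D$ is reduced and avoids $\Supp R$). This is a Bertini/base-point-freeness argument for $|{-2K_X}|$ — very ample for $d\ge 3$, base-point-free for $d\le 2$ — restricted to $Q$; the degree-one del Pezzo, where $-K_X$ itself acquires a base point, is the only case warranting a separate remark.
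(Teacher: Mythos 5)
Your argument is correct, but it takes a genuinely different route from the paper. The paper proves the lemma by a case-by-case B\'ezout count in $\CP^2$: for each entry of Theorem \ref{minusonecurve} it computes $(Q,S)=6e$ and subtracts the contributions $2m_i$ at the nodes of $S$ (e.g.\ $6=4+2$ for a line through two $P_i$, $12=10+2$ for the conic, $36=28+6+2$ for the sextic), implicitly reading off $\sum_i m_i=3e-1$ from the list. You instead work uniformly upstairs on $X_d$: identifying $\pi|_D$ with the normalization of $Q$, using adjunction ($D^2=K_X\cdot D=-1$) to get $\sum_i m_i=3e-1$ and $C\cdot D=-2K_X\cdot D=2$ in one stroke, and then restricting $\pi^*S=C+2\sum_i E_{P_i}$ to $D$ to obtain $\nu^*S=C|_D+2R$ with $\deg R=3e-1$. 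This is close in spirit to the paper's remark immediately \emph{after} the lemma (the Picard-class computation giving $(C.D)=2$), but as a proof it replaces the case analysis by a single computation, it applies verbatim to the more general rational curve classes discussed in the final section, and it makes explicit the genericity needed for $t_Q\neq s_Q$ and for these points to avoid the nodes --- a hypothesis the paper's count also tacitly uses and which it only addresses later via the loci $H_Q$. What the paper's approach buys is concreteness: no appeal to adjunction, ordinariness of the singularities, or Bertini for $|-2K_X|$, just elementary plane geometry for each of the seven types. Your two flagged caveats (ordinariness of the singularities of $Q$ at the $P_i$, and base-point-freeness of $|-2K_X|$ in low degree) are the right places where care is needed, and both are unproblematic here.
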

			   \begin{proof} This is essentially a case by case observation. If $Q=L_{ij}$ then $\deg(Q)=1$ and $(Q,S)=6$. Since it passes through the points $P_i$ and $P_j$ which are of multiplicity $2$, they account for $4$ of the points of intersection. Hence they have to be two more. 
			   	
			   	Similarly, for the conic $F_{i_1,\dots,i_5}$ there are $12$ points of intersection with $S$, $10$ of which are accounted for by the $P_{i_j}$. All the other cases are similar. For example, for $S_6(j)$ is of degree $6$ so should meet $S$ at $36$ points. Since it has  double points at $7$ points $P_i$, for $i \neq j$, this accounts for $28$ points. The triple point at $P_j$ accounts for another $6$ more points hence once again $S_6(j)$ and $S$ meet at $2$ other points.

			   \end{proof}

			  Alternately we can work directly on $X_d$. $\TZ$ is branched at a curve $C$ in $|-2K|$. The $(-1)$ curves in the Picard group are of the form 
			  \begin{itemize}
			  	\item  $(0,-1,0.0,0,0,0,0,0)$ - exceptional curves  
			  	\item  $(1,1,1,0,0,0,0,0,0)$ - lines through $2$ points $P_i$ 
			  	\item  $(2, 1,1,1,1,1,0,0,0)$ - conics through $5$ points. 
			  	\item $ (3,2,1,1,1,1,1,1,0)$ -  cubics through $7$ points with a double point at one of them. 
			  	\item $(4,2,2,2,1,1,1,1,1)$ - quartics with $3$ double points passing through all $8$ points. 
			   \item $(5,2,2,2,2,2,2,1,1)$ - quintics through $8$ points with $6$ double points 
			   \item $(6,3,2,2,2,2,2,2,2)$ - sextics with $7$ double points and one triple point. 
			   \end{itemize}
			   
			   Recall that $K=(-3,1,\dots,1)$. Since $C \in |-2K|$ a simple calculation shows that if $D$ is a $(-1)$-curve
			   $$(C.D)=2$$
			   These are the two points  $t_D$ and $s_D$ lying over $t_Q$ and $s_Q$.

			   If we look at the images $Q$ of the curves $D$ in $\CP^2$, we are now precisely in the situation of \cite{sreeK3} where we constructed motivic cycles on degree $2$ $K3$ surfaces subject to the condition that there existed certain rational curves. The images $Q$ of the $(-1)$ satisfy this condition. 
			   
			   We recall the construction here - keeping in mind that we are working with the $(-1)$ curves $D$ on $X$ and not on $\CP^2$. 
			   
			   \begin{thm}
			   	Let $\TZ$ be the $K3$ double cover of a del Pezzo surface $X_d$ of degree $d$.

			   Let $D_1$ and $D_2$ be two $(-1)$ curves lying over the curves $Q_1$ and $Q_2$ in $\CP^2$. Assume $Q_1$ and $Q_2$ meet at a point $P$ and the points $s_{Q_1},s_{Q_2},t_{Q_1}$ and $t_{Q_2}$ are distinct.  Then $Q_1,Q_2$ and $P$ determine a  motivic cycle $\Xi_{Q_1,Q_2,P}$ in the group $H^3_{\M}(\TZ,\Q(2))$.  
			   	\label{motiviccycle}
			   \end{thm}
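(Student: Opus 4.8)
The plan is to deduce the statement from Proposition \ref{construction} by producing, on the $K3$ surface $\TZ$, two rational curves meeting in two distinct points, namely the preimages $\tilde D_i := \phi_C^{-1}(D_i)$ of the $(-1)$ curves $D_i$ under the double cover $\phi_C\colon \TZ \to X_d$. First I would record the local picture of a $(-1)$ curve $D$ relative to the branch locus. A $(-1)$ curve on the smooth del Pezzo $X_d$ is a smooth rational curve, and as already noted $(C\cdot D)=2$, with the two points of $D\cap C$ lying over the points $t_Q,s_Q$ of $\CP^2$. The hypothesis that $s_{Q_1},s_{Q_2},t_{Q_1}$ and $t_{Q_2}$ are distinct forces in particular $t_{Q_i}\neq s_{Q_i}$, so $D_i$ meets the smooth curve $C$ transversally in two distinct points $t_{D_i},s_{D_i}$ (it cannot meet $C$ in a single point of tangency, as that would give $t_{Q_i}=s_{Q_i}$). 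Hence $\tilde D_i$ is the double cover of $D_i\cong\CP^1$ branched precisely over $\{t_{D_i},s_{D_i}\}$, and therefore — by Riemann–Hurwitz, or directly because $\phi_C$ is étale away from $C$ and ramified transversally of order two along it — $\tilde D_i$ is a reduced, smooth, irreducible rational curve, $\tilde D_i\cong\CP^1$.

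Next I would locate the relevant points on $\tilde D_1\cap\tilde D_2$. Let $\tilde P\in D_1\cap D_2\subset X_d$ be the point lying over $P$; here one understands $P$ to be an intersection point of $Q_1$ and $Q_2$ over which $D_1$ and $D_2$ genuinely meet on $X_d$, concretely any intersection point of $Q_1$ and $Q_2$ other than one of the blown-up points $P_j$ (where strict transforms get separated). The key observation is that $\tilde P\notin C$: if it were, then $\tilde P$ would be one of the two points of $D_i\cap C$ for each $i$, and so its image in $\CP^2$ would lie in $\{t_{Q_1},s_{Q_1}\}\cap\{t_{Q_2},s_{Q_2}\}$, contradicting the distinctness hypothesis. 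Consequently $\phi_C^{-1}(\tilde P)=\{R_1,R_2\}$ consists of two distinct points of $\TZ$, and $R_1,R_2\in\tilde D_1\cap\tilde D_2$. Since $\tilde D_1\cong\tilde D_2\cong\CP^1$ and $R_1\neq R_2$, there are rational functions $f_{\tilde D_1}$ on $\tilde D_1$ and $f_{\tilde D_2}$ on $\tilde D_2$, each unique up to a scalar, with $\div_{\tilde D_1}(f_{\tilde D_1})=R_1-R_2$ and $\div_{\tilde D_2}(f_{\tilde D_2})=R_2-R_1$. Fixing such functions, Proposition \ref{construction} applied to the rational curves $\tilde D_1,\tilde D_2$ and the points $R_1,R_2$ shows that $\Xi_{Q_1,Q_2,P}:=(\tilde D_1,f_{\tilde D_1})+(\tilde D_2,f_{\tilde D_2})$ satisfies the cocycle condition and hence defines a class in $H^3_{\M}(\TZ,\Q(2))$; rescaling $f_{\tilde D_i}$ by a scalar $c$ alters it by the decomposable cycle $(\tilde D_i,c)$, so the class is canonical modulo decomposables, which is all that is needed in the sequel. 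Note also that the cocycle condition is insensitive to whether $\tilde D_1$ and $\tilde D_2$ meet at points other than $R_1,R_2$, since $\div_{\tilde D_1}(f_{\tilde D_1})+\div_{\tilde D_2}(f_{\tilde D_2})=0$ already as a zero-cycle on $\TZ$.

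Apart from invoking Proposition \ref{construction}, the argument is soft: the only real content is the two geometric facts above — that the preimage $\tilde D_i$ of a $(-1)$ curve is an honest rational curve, and that $\tilde P$ avoids the branch curve $C$ — and both follow directly from the hypothesis on $s_{Q_1},s_{Q_2},t_{Q_1},t_{Q_2}$. The one place to be careful is the informal phrase ``$Q_1$ and $Q_2$ meet at a point $P$'': because the $(-1)$ curves typically pass through several of the blown-up points, not every intersection point of $Q_1$ and $Q_2$ in $\CP^2$ lifts to a point of $D_1\cap D_2$ on $X_d$, so one must restrict attention to those $P$ that do. No genericity of $C$ in $|-2K_{X_d}|$ is used at this stage; that hypothesis will enter only later, when one shows $\Xi_{Q_1,Q_2,P}$ is indecomposable and nontrivial by computing the boundary of the localization sequence of Section \ref{localization}.
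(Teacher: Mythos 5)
Your argument reproduces the paper's main case and does so carefully: the paper's proof splits into two cases, and what you prove is exactly its Case~2 ($P$ not on the branch locus), with the useful extra observations (which the paper leaves implicit) that the distinctness of $s_{Q_1},s_{Q_2},t_{Q_1},t_{Q_2}$ forces $\TD_i=\phi_C^{-1}(D_i)$ to be an irreducible rational curve and forces the point of $D_1\cap D_2$ over $P$ to avoid $C$, so that it has two distinct preimages $P_1,P_2$ in $\TZ$. However, you have silently dropped the paper's Case~1. Under the distinctness hypothesis, $P\in Q_1\cap Q_2$ can still lie on the branch sextic, namely when $P$ is one of the blown-up points $P_j$ (a node of $S$); the statement as written does not exclude this, and the paper constructs the cycle there too, by routing through the exceptional curve $\RE_P$ over $P$: both $D_1$ and $D_2$ meet $\RE_P$, one forms $\Xi_{Q_1,\RE_P,P}$ and $\Xi_{Q_2,\RE_P,P}$ as in the unramified case, and defines $\Xi_{Q_1,Q_2,P}$ as their difference (the displayed formula in the paper has an evident typo, but this is the intent). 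You instead reinterpret the hypothesis so as to exclude such $P$; relative to the theorem as stated that is a genuine gap, and to close it you should either add the exceptional-curve construction or build the restriction on $P$ into the hypotheses.

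A second, smaller difference: the paper normalizes the functions by $f_{\TD_i}(s_{\TD_i})=1$, while you leave the scalar free and note the class is determined only modulo decomposables. For the bare existence statement that is fine, but the normalization is not cosmetic later on: in the family version over the moduli space the scalar ambiguity is a decomposable cycle whose ``constant'' can vary with the base and hence can contribute to the boundary map, and the paper's computation that $c_U\equiv 1$ (hence the clean boundary $\TD_{1,1}-\TD_{1,2}$) uses exactly the condition $f(s_{\TD_1})=1$. So if your construction is meant to feed into the indecomposability argument, you should keep the paper's normalization rather than work modulo decomposables.
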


			  \begin{proof}
			  	
			  	There are two sub-cases. 
			  	
			  	\noindent{\bf  Case 1: $P$ lies on the branch locus}. In this case first  we assume that one of $Q_1$ or $Q_2$ is the exceptional fibre at $P$, $\RE_P$, say $Q_2=\RE_P$. In that case the $(-1)$ curves $D_1$ and $D_2$ meet at $1$ point lying over $P$ and in the double cover $\TZ$ there are two points of intersection $P_1$ and $P_2$ of the curves $\TD_1$ and $\TD_2$ lying over $D_1$ and $D_2$ in $\TZ$. 
			  	
			  	If neither $Q_1$ nor $Q_2$ is an exceptional fibre, let $Q$ be the exceptional fibre at $P$. Let $\Xi_{Q_1,Q,P}$ and $\Xi_{Q,Q_2,P}$ be as defined below and define 
			  	$$\Xi_{Q_1,Q_2,P}=\Xi_{Q_1,Q,P}-\Xi_{Q_1,Q_2,P}$$

			  	\noindent{\bf Case 2: $P$ does not lie on the branch locus.} In this case there are two points $P_1$ and $P_2$ lying over $P$ in $\TZ$ and the curves $\TD_1$ and $\TD_2$ meet at $P_1$ and $P_2$. 
			  	
			  	If $D$ is a $(-1)$ curve, let $s_{\TD}$ and $t_{\TD}$ be the two points of $\TD$ lying over the points $s_{D}$ and $t_{D}$ of $C.D$.  Let $f_{\TD_1}$ be the function on $\TD_1$  with $\div_{\TD_1}(f_{\TD_1})=P_1-P_2$ and $f_{\TD_2}$ be the function on $\TD_2$ with $\div_{\TD_2}(f_{\TD_2})=P_2-P_1$.

			  	We further assume that $f_{\TD_i}(s_{\TD_i})=1$, where $s_{\TD_i}$ is the point on the branch locus lying over $s_{D_i}$.  As shown in Proposition \ref{construction}, one has a cycle 
			  	$$\Xi_{Q_1,Q_2,P}=(\TD_1,f_{\TD_1})+(\TD_2,f_{\TD_2})$$ 
			  	in the motivic cohomology group $H^3_{\M}(\TZ,\Q(2))$. 
			  	
			  	\end{proof}
			  	
			  	We need to argue that there exist points of intersection of $D_i$ and $D_j$ for two $(-1)$ curves $D_1$ and $D_2$. This can be seen by looking at the intersection number of the images in $\CP2$. Since we know the degree there its easy to compute the number of points of intersection and we can see in many instances that there has to be a point of intersection outside the branch locus. For instance, if $D_1$ lies over a conic and $D_2$ lies over a cubic, there have to be six points of intersection but the conic meets the  branch locus at $5$ points $P_1$ so they meet at another point outside the branch locus. 
			  	
			  	The construction here relies on the fact that there exist  rational curves meeting the branch locus at double points except at two points.  And for those surfaces where the rational curve meets the branch locus everywhere at double points the rank of the Neron-Severi of the double cover increases. 
			  	
			  	This is something special to this case. There are other surfaces - like Horikawa surfaces, which appear as double covers of rational surfaces branched at curves. In those cases, however, it is not clear that such rational curves exist. For instance, if the branch locus is an octic then it is not clear that there exists a line meeting the octic at 4 bi-tangents. In this case it appears to be a happy accident of enumerative geometry.

			   As things stand we do not know if the cycles $\Xi_{Q_1,Q_2,P}$ in
			   $H^3_{\M}(\TZ,\Q(2))$ are non-trivial. We will show that in fact they  are generically indecomposable. These cycles are defined as long as the two other points of intersection of the curves with $C$ are distinct.

			   	\subsection{Indecomposability}
			   	
			   	We first discuss what happens when $s_D$ and $t_D$ coincide. 
			   	
			   	\begin{prop}
			   		Let $D_i$ be a $(-1)$ curve on $X_d$. Let $s_{D_i}$ and $t_{D_i}$ be the two points of intersection of $D_i$ with $C$. If $s_{D_i}$ and $t_{D_i}$ coincide then $\phi_C^{-1}(D)$ has two components $\TD_{i,1}$ and $\TD_{i,2}$. Equivalently, if $Q_i$ is the image of $D_i$ in $\CP^2$ which is not a point and $s_{Q_i}$ and $t_{Q_i}$ coincide then $\psi_S^{-1}(Q_i)$ has two components $\TQ_{i,1}$ and $\TQ_{i,2}$. 
			   	\end{prop}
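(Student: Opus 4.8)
The plan is to reduce the claim to a direct analysis of the restriction of the double cover $\phi_C$ to the rational curve $D_i$. First recall that every $(-1)$-curve on a del Pezzo surface is a smooth rational curve: adjunction gives $2p_a(D_i)-2 = D_i^2 + K_{X_d}\cdot D_i = -1 + K_{X_d}\cdot D_i$, and ampleness of $-K_{X_d}$ forces $K_{X_d}\cdot D_i\le -1$, hence $p_a(D_i)=0$, $K_{X_d}\cdot D_i=-1$ and $D_i\cong\CP^1$. Next, write the double cover in the standard form $\phi_C\colon\TZ=\Spec_{X_d}(\OO_{X_d}\oplus\LL^{-1})\to X_d$, where $\LL=\OO_{X_d}(-K_{X_d})$ satisfies $\LL^{\otimes 2}=\OO_{X_d}(-2K_{X_d})=\OO_{X_d}(C)$ and the algebra structure is multiplication by the section $s\in H^0(X_d,\LL^{\otimes 2})$ with $\div(s)=C$. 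Since $C$ is irreducible with $C^2=4d>0>D_i^2$ we have $D_i\not\subset C$, so restricting these data to $D_i$ exhibits $\phi_C^{-1}(D_i)$ as the double cover of $D_i\cong\CP^1$ associated with the line bundle $\LL|_{D_i}$ and the section $s|_{D_i}$. Here $\LL\cdot D_i=-K_{X_d}\cdot D_i=1$, so $\LL|_{D_i}\cong\OO_{\CP^1}(1)$, and $s|_{D_i}$ is a section of $\OO_{\CP^1}(2)$ whose zero divisor is $C\cdot D_i=s_{D_i}+t_{D_i}$, of degree $(C\cdot D_i)=2$ by the computation above.

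Now suppose $s_{D_i}=t_{D_i}=:p$, so that $s|_{D_i}$ vanishes to order $2$ at $p$. The space of sections of $\OO_{\CP^1}(2)$ vanishing doubly at $p$ is one-dimensional and spanned by $\sigma^{\otimes 2}$, where $\sigma$ is a section of $\LL|_{D_i}$ with $\div(\sigma)=[p]$; hence, after rescaling $\sigma$ by a scalar (working over $\C$, or over $\bar K$), $s|_{D_i}=\sigma^{\otimes 2}$. Writing $y$ for the fibre coordinate on the total space of $\LL|_{D_i}$, the cover $\phi_C^{-1}(D_i)$ is then cut out by $y^{\otimes 2}=\sigma^{\otimes 2}$, i.e.\ by $(y-\sigma)(y+\sigma)=0$, and therefore splits as the union of the two graphs $\TD_{i,1}=\{y=\sigma\}$ and $\TD_{i,2}=\{y=-\sigma\}$. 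Each graph is a section of $\LL|_{D_i}$ over $D_i$ and so maps isomorphically onto $D_i\cong\CP^1$, and the two meet exactly over $p$ (where $\sigma$ and $y$ both vanish), transversally; these are the required components. One can cross-check this more intrinsically: $(\phi_{C\,*}\OO_{\TZ})|_{D_i}\cong\OO_{\CP^1}\oplus\OO_{\CP^1}(-1)$ has Euler characteristic $1$, so $\phi_C^{-1}(D_i)$ has arithmetic genus $0$; a local computation shows it acquires an ordinary node over $p$ and is smooth elsewhere; and a complete connected curve of arithmetic genus $0$ with a node cannot be irreducible, so is a union of two smooth rational curves. (Conversely, when $s_{D_i}\ne t_{D_i}$ the section $s|_{D_i}$ has two distinct simple zeros, $y^{\otimes 2}=s|_{D_i}$ is irreducible, and $\phi_C^{-1}(D_i)\cong\CP^1$ is irreducible; so reducibility of $\phi_C^{-1}(D_i)$ is equivalent to the tangency $s_{D_i}=t_{D_i}$.)

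For the equivalent statement on $\CP^2$ I would pass through the commutative square relating $\phi_C$, $\psi_S$, $\pi$ and $\tilde\pi$. The morphisms $\pi\colon X_d\to\CP^2$ and $\tilde\pi\colon\TZ\to Z$ are birational, $D_i$ is the strict transform of $Q_i=\pi(D_i)$, and by the preceding lemma together with the classification of $(-1)$-curves the two points $s_{Q_i},t_{Q_i}$ of $Q_i\cap S$ lying outside the $P_j$'s are smooth points of $Q_i$ not lying over any blown-up point. Hence $\pi$ is a local isomorphism near $\{s_{D_i},t_{D_i}\}$, carrying them to $\{s_{Q_i},t_{Q_i}\}$, so that $s_{D_i}=t_{D_i}$ if and only if $s_{Q_i}=t_{Q_i}$; and $\tilde\pi$, being an isomorphism over the generic point of $Q_i$, matches the irreducible components of $\phi_C^{-1}(D_i)$ with those of $\psi_S^{-1}(Q_i)$ dominating $Q_i$. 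This transports the count of two components, with $\TQ_{i,1},\TQ_{i,2}$ the images of $\TD_{i,1},\TD_{i,2}$.

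I expect the only real friction to be in two bookkeeping points rather than in any deep idea. The first is identifying the restricted double cover precisely --- pinning down the $\OO_{\CP^1}$-algebra structure and the identity $s|_{D_i}=\sigma^{\otimes 2}$ at a tangency point (including the harmless choice of a square root, which is why one works over $\C$ or $\bar K$), and, if one prefers the intrinsic route, checking that $\phi_C^{-1}(D_i)$ is reduced with no embedded components so that arithmetic-genus and degree arguments apply to its pieces. The second is in the $\CP^2$ reformulation: one must be sure the points $s_{Q_i},t_{Q_i}$ genuinely avoid the exceptional locus of $\pi$, which is precisely where the enumerative input of the lemma is used. Everything else is a transversality count on a smooth surface.
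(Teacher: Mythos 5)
Your proof is correct, but it takes a genuinely different route from the paper's. The paper argues via normalization: when $s_{D_i}=t_{D_i}$, every point of $\TD_i$ lying over $D_i\cap C$ is a node, so the induced double cover of $D_i\cong\CP^1$ by the normalization of $\TD_i$ is \emph{unramified}, and since $\CP^1$ is simply connected (equivalently, by Riemann--Hurwitz) it admits no irreducible unramified double cover; hence $\TD_i$ splits into two components, and the identical argument applied directly to $\TQ_i\to Q_i$, normalizing the nodal rational image curve $Q_i$, gives the $\CP^2$ statement without any transport along $\pi$. You instead restrict the $\ZZ/2$-algebra $\OO_{X_d}\oplus\OO_{X_d}(K_{X_d})$ defining $\phi_C$ to $D_i$ and use that a section of $\OO_{\CP^1}(2)$ with a double zero is a square, so that $y^{2}-\sigma^{2}$ factors into the two graphs $y=\pm\sigma$. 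Your version buys more explicit information --- each component is a section of $\LL|_{D_i}\cong\OO_{\CP^1}(1)$, hence smooth rational and mapping isomorphically to $D_i$, the two meeting transversally in a single node over $p$, together with the converse that the cover is irreducible when $s_{D_i}\neq t_{D_i}$ --- at the cost of mild hypotheses (algebraically closed base, characteristic $\neq 2$, harmless in this setting) and of having to move the statement to $\CP^2$ by hand, where, as you correctly flag, one needs $s_{Q_i},t_{Q_i}$ to avoid the nodes $P_j$ of $S$, which is exactly what the paper's lemma on the intersection $Q\cap S$ provides. The paper's normalization argument is shorter, applies verbatim to the singular curve $Q_i$ so both statements come at once, and extends immediately to the more general rational curves meeting the branch locus only in double points that are used in the final section; your algebra-splitting argument also extends (a section whose divisor is everywhere of even multiplicity on $\CP^1$ is a square), but that requires pulling back to the normalization anyway, at which point the two proofs essentially merge.
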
	
			   	
			   	\begin{proof} If the two points coincide then all the ramification points of the map $\TQ_i \rightarrow Q_i$ and $\TD_i \rightarrow D_i$ are nodal. Hence the induced map from the normalization of $\TQ_i$ or $\TD_i$ to $\CP^1$ is an {\em unramified} double cover. However, $\CP^1$  does not have an irreducible unramified double cover as one can see from the Riemann-Hurwitz formula or the fact that $\CP^1$ is simply connected. Hence $\TQ_i$ and $\TD_i$ have two components $\TQ_{i,2}$ and $\TQ_{i,2}$ and $\TD_{i,1}$ and $\TD_{i,2}$. The points of intersection of $\TQ_{i,1}$ and $\TQ_{i,2}$ are the nodes of $\TQ_i$. 
			   		
			   		\end{proof} 
			   		
			   The components $\TD_1$ and $\TD_2$ are new elements of the Neron-Severi group of $\TZ$. Hence if $s_Q=t_Q$ the Picard number of  $\TZ$ increases by $1$. Note that the involution induced by the double cover interchanges $\TD_1$ and $\TD_2$. 
			   
			    The moduli of  $K3$ surfaces where the rank of the Neron-Severi is one more than the generic rank is usually called a {\em Heegner divisor} or a {\em Noether-Lefschetz} locus. In the special case of Kummer surfaces of Abelian surfaces these are also called {\em Humbert Surfaces} and correspond to when the Abelian surface has multiplication by a real quadratic field. Let $H_Q$ denote the submoduli where there is a rational curve $Q$ corresponding to a $(-1)$ curve which meets the nodal sextic only at double points - that is, the points $s_Q$ and $t_Q$ coincide. 
			   	
			    The construction we made of a cycle in $H^3_{\M}(\TZ,\Q(2))$ was for a fixed $K3$ surface. However, if $M_d$ is the moduli of del Pezzo surfaces of degree $d$, the points $P_1$ and $P_2$ are the intersections of the pre-images of two $(-1)$ curves, so are defined everywhere and determine a section of the universal family over the moduli space. In particular, we have an algebraic family of cycles $\Xi_{Q_{1,z},Q_{2,z},P_z}$ in $H^3_{\M}(\TZ_z,\Q(2))$ as long as the points $s_{Q_{i,z}}$ and $t_{Q_{i,z}}$ for $i \in \{1,2\}$ are distinct. This defines a cycle $\Xi_{Q_{1,U},Q_{2,U},P_{U}}$ defined in the fibre over the open set $U=M_d\backslash \{H_{Q_1} \cup H_{Q_2}\}$.

			    In the fibres over $H_Q$, where $s_Q$ and $t_Q$ coincide, the points $P_1$ and $P_2$ are interchanged by the involution. Hence without loss of generality we may assume that $P_1$ lies on $\TD_1$ and $P_2$ lies on $\TD_2$.
			    
			    We then have the following theorem:

			   \begin{thm} Let $H_{Q_1}$ be the submoduli of moduli space of del Pezzo surfaces of degree $d$ such that $s_{Q_1}$ and $t_{Q_1}$ coincide. Let $\Xi_{Q_{1,U},Q_{2,U},P_U}$ be the cycle in $H^3_{\M}(\TZ_U,\Q(2))$ constructed in Theorem \ref{motiviccycle}, where $U=M_d \backslash \{H_{Q_1} \cup H_{Q_2}\}$. We assume that the section $P_i$ intersects $\TD_i$.  Then, under the boundary map of the localization sequence, 
			   	$$\partial (\Xi_{Q_{1,U},Q_{2,U},P_{U}})=\TD_{1,1}-\TD_{1,2}$$

	\end{thm}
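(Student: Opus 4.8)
The plan is to compute $\partial$ directly from its definition. Two preliminary reductions are needed. First, the two points $s_{D_i},t_{D_i}$ of $C\cdot D_i$ are conjugate over $K(M_d)$ --- they are exchanged by the monodromy of a small loop around $H_{Q_i}$ --- so the normalisation $f_{\TD_i}(s_{\TD_i})=1$ only makes sense after passing to a finite cover $M_d'\to M_d$ on which $s_{D_i},t_{D_i}$ become sections; I would therefore work over $M_d'$, localised at the generic point of the preimage $H'_{Q_1}$ of $H_{Q_1}$, so that the base is a trait $\Spec\OO$. Second, by the formula for $\partial$, $\partial(\Xi_{Q_{1,U},Q_{2,U},P_U})=\div_{\CC_1}(f_{\TD_1})+\div_{\CC_2}(f_{\TD_2})$ with $\CC_i$ the closure of $\TD_i$ in $\TZ\to\Spec\OO$; the horizontal parts $\pm(\overline{P_1}-\overline{P_2})$ cancel, and over $H'_{Q_1}$ the curve $Q_2$ is generic, so $\CC_2$ is smooth with integral special fibre and $f_{\TD_2}$ --- which restricts there to a non-constant rational function, being normalised at $s_{\TD_2}$ --- contributes no vertical component. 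Hence $\partial(\Xi_{Q_{1,U},Q_{2,U},P_U})$ equals the vertical part of $\div_{\CC_1}(f_{\TD_1})$ over $H'_{Q_1}$, which by the preceding proposition is $a\TD_{1,1}+b\TD_{1,2}$ for integers $a,b$ to be determined.

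Next I would write down a local model for $\CC_1$ near the node. Let $r_0\in D_1$ be the point at which $C$ becomes tangent to $D_1$ and $x$ a local coordinate on $D_1\cong\CP^1$ centred at $r_0$. Over $M_d'$ the two branch points of $\TD_1\to D_1$ are sections that meet transversally on $H'_{Q_1}$, so after translating $x$ we may take them to be $x=\gamma$ and $x=-\gamma$ with $\gamma$ a uniformiser of $\OO$; then $\CC_1$ is locally $\{z^2=x^2-\gamma^2\}$, the ordinary double point $x^2-z^2-\gamma^2=0$, whose special fibre is the pair of lines $\TD_{1,1}=\{x=z\}$ and $\TD_{1,2}=\{x=-z\}$ meeting at the node $n$. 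Resolving $n$ yields $\widetilde{\CC_1}\to\CC_1$ with a $(-2)$-exceptional curve $\Gamma$ and special fibre the chain $\TD_{1,1}+\Gamma+\TD_{1,2}$, where $\TD_{1,i}^2=-1$, $\Gamma^2=-2$, $\TD_{1,i}\cdot\Gamma=1$ and $\TD_{1,1}\cdot\TD_{1,2}=0$. Writing $\div_{\widetilde{\CC_1}}(f_{\TD_1})=\overline{P_1}-\overline{P_2}+a\TD_{1,1}+g\Gamma+b\TD_{1,2}$ and using that $\overline{P_1}$ meets only $\TD_{1,1}$ and $\overline{P_2}$ only $\TD_{1,2}$ (the labelling convention fixed before the statement), the relations $\div_{\widetilde{\CC_1}}(f_{\TD_1})\cdot F=0$ for each complete fibre component $F$ give $a=1+g$ and $b=g-1$ (the relation for $F=\Gamma$ is then automatic).

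It remains to determine $g$ from the normalisation $f_{\TD_1}(s_{\TD_1})=1$. The section $\overline{s_{\TD_1}}$, now defined, is $\{x=\gamma,\ z=0\}$; it passes through $n$ with tangent direction $[1:0:1]$, which is distinct (we are over $\C$) from the directions $[1:1:0]$ and $[1:-1:0]$ of $\TD_{1,1}$ and $\TD_{1,2}$. Hence its strict transform meets the special fibre of $\widetilde{\CC_1}$ at a general point $p$ of $\Gamma$, where $\widetilde{\CC_1}$ is smooth and $\gamma$ cuts out $\Gamma$, so $f_{\TD_1}=\gamma^{g}\cdot(\mathrm{unit})$ near $p$; restricted to $\overline{s_{\TD_1}}$ this has a zero or pole of order $g$ at $p$, contradicting $f_{\TD_1}\equiv 1$ along $\overline{s_{\TD_1}}$ unless $g=0$. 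Therefore $a=1$, $b=-1$, and pushing $\div_{\widetilde{\CC_1}}(f_{\TD_1})$ forward along $\widetilde{\CC_1}\to\CC_1\hookrightarrow\TZ$ (which kills $\Gamma$) gives $\partial(\Xi_{Q_{1,U},Q_{2,U},P_U})=\TD_{1,1}-\TD_{1,2}$ as a class in $H^2_{\M}(\TZ_{H_{Q_1}},\Q(1))$.

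I expect the delicate point to be precisely this last piece of bookkeeping. A naive attempt to read the boundary off the "limit of $f_{\TD_1}$ on the nodal curve" is ill-posed, since $f_{\TD_1}$ vanishes along $\TD_{1,1}$ and has a pole along $\TD_{1,2}$ and so has no specialisation to a rational function on $\TD_{1,1}\cup\TD_{1,2}$; the genuine contribution surfaces only after resolving the ordinary double point of the total space, and it is the normalisation $f_{\TD_i}(s_{\TD_i})=1$ --- with the accompanying passage to the cover $M_d'$ --- that fixes the otherwise ambiguous multiplicity $g$, and hence singles out $\TD_{1,1}-\TD_{1,2}$ among its translates by $\TD_{1,1}+\TD_{1,2}=[\TD_1|_{H_{Q_1}}]$. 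One also has to check that along a generic point of $H_{Q_1}$ the tangency of $C$ and $D_1$ is simple, so that the total-space singularity is genuinely of type $A_1$; a higher tangency lies in a proper closed subset of $M_d$ and would change the resolution and the combinatorics. The remaining verifications --- that $f_{\TD_2}$, and $f_{\TD_1}$ over the other divisor $H_{Q_2}$, contribute nothing vertical over $H'_{Q_1}$ --- are routine.
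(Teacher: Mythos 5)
Your proof is correct in substance and reaches the right answer, but it follows a genuinely different route from the paper. You localise at the generic point of $H_{Q_1}$, pass to a cover so that $s_{D_1},t_{D_1}$ become sections, write the explicit local model $z^2=x^2-\gamma^2$ for the degenerating family of curves, resolve the resulting $A_1$ point of the total space, and then pin down the vertical coefficients by two mechanisms: intersection of $\div(f_{\TD_1})$ with the complete fibre components (giving $a=1+g$, $b=g-1$), and the normalisation $f_{\TD_1}(s_{\TD_1})=1$ propagated along the closure of the section through the node (forcing $g=0$). The paper instead never writes a local model: it uses the covering involution $\tiota$, with the same normalisation giving $f\cdot f^{\tiota}=1$, to conclude $a=-b$ and that the exceptional curve $\RE_{R_Q}$ occurs with coefficient $0$, and then determines $a=1$ by a global degree argument on the moduli space, choosing a function $h$ on $M_d$ with $\div(h)=H_{Q_1}-\sum a_Z Z$ (this needs rational and homological equivalence to agree on $M_d$) and intersecting $\div(f_Uh^{-a})$ with $\TD_{1,1,H_{Q_1}}$ --- which is really the same fibre-component relation you use, phrased globally. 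What your route buys: it avoids the global input on $M_d$, makes explicit the geometry at the node (in particular that the tangency is simple, so the singularity is $A_1$), and it is the only place where the monodromy exchanging $s_{D_1}$ and $t_{D_1}$ --- which the paper silently ignores --- is confronted; your observation that the normalised $f_{\TD_1}$ only exists after the base change, and that the normalisation is exactly what removes the ambiguity of multiplying by powers of the uniformiser (your $g$, the paper's $c$), is a genuine clarification, since over $M_d$ itself an unnormalised $f$ pins the boundary only up to multiples of the full fibre $\TD_{1,1}+\TD_{1,2}$, i.e.\ up to a decomposable modification. What the paper's route buys is brevity: the involution symmetry delivers the antisymmetric combination $a(\TD_{1,1}-\TD_{1,2})$ in two lines without any resolution of the total space. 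The only point you should state once explicitly is that the orders along $\TD_{1,1},\TD_{1,2}$ are computed at their generic points, where the closure is regular, so working on your resolved model and pushing forward is legitimate; with that, and the genericity caveats you already flag, your argument is complete.
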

	
	\begin{proof} Let
		 $$\Xi_{Q_{1,U},Q_{2,U},P_U}=(\TD_{1,U},f_{\TD_1,U}+(\TD_2,f_{\TD_2,U})$$ 
		 be the cycle with 
		 $$\div(f_{\TD_1,U})=P_{U,1}-P_{U,2}$$
		 and 
		 $$\div(f_{\TD_2,U})=P_{U,2}-P_{U,1}$$
		 Recall that we have the following diagram
		 
		 \begin{center}
		 	
		 	\begin{tikzcd}
		 		
		 		\TZ \arrow[d,"\phi_C"] \arrow[r,dashrightarrow,"\tilde{\pi}"]& Z \arrow[d,"\psi_C"]\\
		 		X_d \arrow[r,dashrightarrow,"\pi"] & \CP^2 
		 		
		 	\end{tikzcd}
		 	
		 \end{center}

		 Let  $\iota$ be the involution on $Z_U$ induced by the double cover $\psi_S:Z \rightarrow \CP^2$ and let $\tiota$ be the involution induced by the double cover $\TZ \rightarrow X_d$. The fixed points of $\iota|_{\TQ}$ are the ramification points of the double cover to $Q$.  
		 The involution $\iota$  lifts to the involution $\tiota$ on $\TD$ and 
		 $$ \tiota(P_{U,1})=P_{U,2}$$
		 Since $\TD_{i,U}$ is the pre-image of $D_{i,U}$, $\tiota$ stabilizes it: $\tiota(\TD_i)=\TD_i$. 
		 Since $s_{Q_{i,U}}$ and $t_{Q_{i,U}}$ are on the branch locus we  have $\tiota(s_{Q_{i,U}})=s_{Q_{i,U}}$ and $\tiota(t_{Q_{i,U}})=t_{Q_{i,U}}$. 
		 
		 Let $f=f_{\TD_1}$ and let $f^{\tiota}=f\circ \tiota$. Then 
		 $$\div(f^{\tiota})=P_{U,2}-P_{U,1}$$
		 Hence $f^{\tiota}=\frac{c_{U}}{f}$, where $c_U$ is a `constant' on $U$. Since we have assumed $f(s_{\TD_{1,U}})=1$ and $f^{\tiota}(s_{Q_U})=f(\tiota(s_{Q_U}))=f(s_Q)=1$, we have $c_U \equiv 1$.
		 
		 To compute the boundary of $\Xi_{Q_{1,U},Q_{2,U},P_{U}}$, we need to compute the divisor of $f_{U}$ on the closure of $\TD_{1,U}$. Since $\TD_1$ gains a node over $H_Q$ we have to consider the divisor on the blow up of $\TD_1$ at the node $R_Q$.

		 Over $H_{Q_1}-H_{Q_2}$, the closure has three components, $\TD_{1,1,H_Q}$, $\TD_{1,2,H_Q}$ and the exceptional fibre $\RE_{R_Q}$. The involution $\tiota$ interchanges them and as remarked above, we may assume that the closure of $P_{i,U}$ lies on $\TD_{1,i,H_Q}$. Further, since $\RE_{R_Q}$ is the exceptional fibre over the node $R_Q=\TD_{1,1,H_Q} \cap \TD_{1,2,H_Q}$, the involution stabilizes $\RE_{R_Q}:\tiota (\RE_{R_Q})=\RE_{R_Q})$. 
		 
		 On the closure of $\TD_{1,U}$, 
		 $$\div_{\bar{\TD}_{1,U}}(f_U)=\UH+ a\TD_{1,1,H_Q}+b \TY_{1,2,H_Q}+ c\RE_{R_Q}$$
		 for some $a, b$ and $c$ in $\ZZ$, where $\UH=\overline{\div_{Y_U}(f_U)}$.	is the closure of the horizontal divisor.
		 
		 We recall that $\div(f^{\tiota})=\tiota(\div(f))$ and $f^{\tiota}=1/f$, so, on one hand 
		 	 $$\div_{\bar{\TD}_{1,U}}(f^{\tiota}_U)=\tiota(\UH+ a\TD_{1,1,H_Q}+b \TD_{1,2,H_Q} + c \RE_{R_Q})=-\UH+a\TD_{1,2,H_Q}+b \TD_{1,1,H_Q} + c\RE_{R_Q}$$
		 on the other hand 
		 $$\div_{\bar{\TD}_{1,U}}(f^{\tiota}_U)=-\div_{\bar{\TD}_{1,U}}(f_U)=-\UH-a\TD_{1,1,H_Q}-b \TD_{1,2,H_Q}-c\RE_{R_Q}$$
		 Comparing coefficients we have $a=-b$ and $c=0$. Hence 
		 $$\div_{\bar{\TD}_{1,U}}(f_U)=\UH+ a(\TD_{1,1,H_{Q_1}}- \TD_{1,2,H_{Q_1}})$$
		 Now we consider $\TD_{2,U}$. This remains irreducible over $H_{Q_1}-H_{Q_2}$. Hence 
		 $$\div_{\overline{\TD_{2,U}}}(f_{\TD_{2,U}})=-\UH$$
		 Adding this to the above result shows that 
		 $$\partial (\Xi_{Q_U,P_U})=a(\TD_{1,1}-\TD_{1,2})$$

		 We now show $a=1$. The moduli space of del Pezzo surfaces of degree $d$ has trivial rational $H^1$, so homological and rational equivalence are the same. Therefore we can find a function $h$ such that 
		 $$\div(h)=H_{Q_1} - \sum a_Z Z $$
		 for some divisors $Z$ which we may assume are disjoint from $H_{Q_1}$. Let $(\TD_U.h)$ be the decomposable cycle in $H^3_{\M}(\TZ_U,\Q(2))$. The boundary of this cycle is 
		 $$ \partial((\TY,h))=\TD_{1,1,H_{Q_1}}+\TD_{1,2,H_{Q_1}}+ \RE_{R_Q} - \sum a_Z \TD_{1,Z}$$
		 where $\TD_{1,Z}$ is the restriction of $\TD_1$ to the fibres over $Z$. The $\TD_{1,Z}$ do not intersect $\TD_{1,i,H_Q}$ or $\RE_{R_Q}$.  
		 
		 Consider the function $f_U h^{-a}$ . This has divisor  
		 $$\div(f_U h^{-a})= \UH+ a(\TD_{1,1,H_{Q_1}}-\TD_{1,2,H_{Q_1}})-a(\TD_{1,1,H_{Q_1}}+\TD_{1,2,H_{Q_1}}+\RE_{R_Q}) + a(\sum a_Z \TD_{1,Z})$$
		 $$=\UH-2a \TD_{1,2} - a\RE_{R_Q} +  a(\sum a_Z \TD_{1,Z})$$
		 In particular, the support of the divisor does not contain $\TD_{1,1,H_{Q_1}}$. Hence $f_Uh^{-a}|_{\TY_{1,H_{Q_1}}}$ is a function on $\TD_{1,1,H_{Q_1}}$ and one has 
		 $$\deg(\div(f_U h^{-a}|_{\TD_{1,1,H_{Q_1}}}))=0$$
		 On the other hand, 
		 $$\deg(\div(f_U h^{-a}|_{\TD_{1,1,H_{Q_1}}}))=(\div(f_U h^{-a}),\TD_{1,1,H_{Q_1}})$$
		 which implies 
		 $$(\UH,\TD_{1,1,H_{Q_1}})-2a(\TD_{1,2,H_{Q_1}},\TD_{1,H_{Q_1}}) -a(\RE_{R_Q},\TD_{1,1,H_Q} + a (\sum a_Z \TD_Z, \TD_{1,1,H_{Q_1}})=0$$
		 We know 
		 $$(\UH,\TD_{1,1,H_{Q_1}})=1 \;\; (\TD_{1,2,H_{Q_1}},\TD_{1,1,H_Q})=0 \;\; (\RE_{R_Q},\TD_{1,1,H_{Q_1}})=1$$ 
		 and 
		 $$(\TD_Z,\TD_{1,1,H_{Q_1}})=0 \text{ for all $Z$}$$
		 This gives $0=1-a$. Hence $a=1$. 
		 		
		\end{proof}
		
		\begin{rem} On the set $H_{Q_1} \cap H_{Q_2}$ the boundary is more complicated and involves both the new cycles $\TD_{1,1}$ and $\TD_{2,1}$ 
			
			\end{rem}
		
		\begin{cor} The cycles $\Xi_{Q_1,Q_2,P}$   are generically indecomposable. 
			\end{cor}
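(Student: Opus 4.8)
The plan is to combine the boundary computation of the preceding theorem with the description, recalled in Section~\ref{localization}, of the boundary map on decomposable cycles. Recall that ``generically indecomposable'' means that $\Xi_{\eta}:=\Xi_{Q_{1,\eta},Q_{2,\eta},P_{\eta}}\in H^3_{\M}(\TZ_{\eta},\Q(2))$ is indecomposable, $\eta$ being the generic point of the moduli space $M_d$; since decomposability is preserved under base change, it suffices to derive a contradiction from the assumption that $\Xi_{\eta}$ becomes decomposable over $\bar K$, the algebraic closure of $K(M_d)$. I would choose the two $(-1)$-curves $D_1,D_2$ of Theorem~\ref{motiviccycle} so that the divisors $H_{Q_1},H_{Q_2}\subset M_d$ are distinct, so that the preceding theorem applies and identifies the component of $\partial(\Xi_{\eta})$ along $H_{Q_1}$ with $\TD_{1,1}-\TD_{1,2}\in H^2_{\M}(\TZ_{H_{Q_1}},\Q(1))=NS(\TZ_{H_{Q_1}})\otimes\Q$. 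First I would record two properties of this class. It is nonzero: $\TD_{1,1}$ and $\TD_{1,2}$ are distinct irreducible $(-2)$-curves on the K3 surface $\TZ_{H_{Q_1}}$ meeting at the node $R_{Q_1}$, so $\TD_{1,1}\cdot\TD_{1,2}\ge 0$ while $\TD_{1,i}^2=-2$, which excludes $[\TD_{1,1}]=[\TD_{1,2}]$ in $NS\otimes\Q$. And it is anti-invariant for the covering involution $\tiota$ of $\phi_C\colon\TZ\to X_d$, since $\tiota$ interchanges $\TD_{1,1}$ and $\TD_{1,2}$; hence $\TD_{1,1}-\TD_{1,2}$ spans a line in the anti-invariant part $NS(\TZ_{H_{Q_1}})^{-}\otimes\Q$.

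Next I would assume, toward a contradiction, that $\Xi_{\eta}$ is decomposable over $\bar K$ and analyze the $H_{Q_1}$-component of its boundary. By Section~\ref{localization}, a decomposable cycle is a $\Q$-linear combination of cycles $(C_{\eta,j},a_{\eta,j})$ with $a_{\eta,j}$ constant --- functions on the base, after a finite base change of $M_d$ which changes nothing below --- and the $H_{Q_1}$-component of its boundary is $\sum_{j}\ord_{H_{Q_1}}(a_{\eta,j})\,\mathrm{sp}_{H_{Q_1}}([C_{\eta,j}])$, where $\mathrm{sp}_{H_{Q_1}}\colon NS(\TZ_{\bar\eta})\otimes\Q\to NS(\TZ_{H_{Q_1}})\otimes\Q$ is the specialization map (with a norm inserted to account for the finite base change, which does not affect the argument). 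Thus $\TD_{1,1}-\TD_{1,2}$ would lie in the image of $\mathrm{sp}_{H_{Q_1}}$.

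The crucial step is then to show that this image is contained in the $\tiota$-invariant part $NS(\TZ_{H_{Q_1}})^{+}\otimes\Q$. Since $\tiota$ is a morphism of the whole family $\XX\to M_d$, it acts compatibly on $\TZ_{\bar\eta}$ and on $\TZ_{H_{Q_1}}$, and $\mathrm{sp}_{H_{Q_1}}$ is $\tiota$-equivariant; so it is enough to know that $NS(\TZ_{\bar\eta})\otimes\Q$ is entirely $\tiota$-invariant. This is the Noether--Lefschetz-type statement that for a very general $C\in|-2K_{X_d}|$ the K3 surface $\TZ$ has Picard number $10-d$, so that $NS(\TZ_{\bar\eta})\otimes\Q=\phi_C^{*}NS(X_d)\otimes\Q$; the right-hand side already has rank $10-d$ (as $X_d$ is the blow-up of $\CP^2$ at $9-d$ points) and consists of $\tiota$-invariant classes, because $\phi_C\circ\tiota=\phi_C$. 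Geometrically this is the familiar fact that, $X_d$ being rational, the holomorphic $2$-form of $\TZ$ is $\tiota$-anti-invariant, so the anti-invariant part of $H^2(\TZ)$ is purely transcendental for generic $C$. Granting this, $\mathrm{sp}_{H_{Q_1}}(NS(\TZ_{\bar\eta})\otimes\Q)\subseteq NS(\TZ_{H_{Q_1}})^{+}\otimes\Q$, contradicting $\TD_{1,1}-\TD_{1,2}\in NS(\TZ_{H_{Q_1}})^{-}\otimes\Q\setminus\{0\}$; so $\Xi_{\eta}$ is indecomposable.

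I expect the main obstacle to be the Noether--Lefschetz input $NS(\TZ_{\bar\eta})\otimes\Q=\phi_C^{*}NS(X_d)\otimes\Q$, i.e.\ that the generic Picard number of the K3 double cover is exactly $10-d$: this is precisely what makes $\TD_{1,1}-\TD_{1,2}$ a genuinely new class in the special fibre that no decomposable cycle can produce in its boundary. The remaining ingredients --- the $\tiota$-equivariance of the specialization map, the boundary formula for decomposable cycles, and the fact that the finite base change and the accompanying norm keep the image of the boundary inside the $\tiota$-invariants (again because $\tiota$ is defined over $M_d$) --- are routine within the framework of Section~\ref{localization}.
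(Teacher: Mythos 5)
Your proposal is correct and follows essentially the same route as the paper: it combines the boundary computation $\partial(\Xi)= \TD_{1,1}-\TD_{1,2}$ with the observation from the localization-sequence discussion that decomposable cycles can only contribute specializations of generic N\'eron--Severi classes to the boundary. The only difference is that you actually justify the step the paper merely asserts --- that $\TD_{1,1}-\TD_{1,2}$ is not such a specialization --- by noting it is nonzero and $\tiota$-anti-invariant while the generic N\'eron--Severi group is $\phi_C^{*}NS(X_d)\otimes\Q$, hence $\tiota$-invariant; this Noether--Lefschetz input is implicit in the paper's remark that the Picard number jumps along $H_{Q_1}$.
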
 
		\begin{proof}	
			
			The boundary of a decomposable cycle cannot involve cycles which are not the restriction of generic cycles. Since $\TD_{1,1,H_{Q_1}}-\TD_{1,2,H_{Q_1}}$ is not the restriction of a generic cycle, our cycle is not decomposable. 
			\end{proof}
		
		In the case of K3 double covers of degree $2$ del Pezzo surfaces, this is a result of Sato \cite{satodelpezzo}, which is what inspired this paper. However, our methods are different. His proof of generic indecomposability is to show that the Beilinson regular of the family of cycles evaluated on a specific $(1,1)$ form is a is non-trivial function. Our argument can be considered a `non-Archimedean' version of that.

		\section{Degree $d$ del Pezzo surfaces and the degeneracy loci}
		
		In this section we describe the degeneracy loci $H_Q$. These are typically interesting submoduli of the moduli of del Pezzo surfaces which have been classically studied. 
		
		\subsection{Degree 1}
		
		In this case the map induced by $|-2K_X|$ determines a map to $\CP^3$ and the del Pezzo surface $X_1$ can be realized as a double cover of a quadric cone branched at a sextic, which is the complete intersection of a cubic surface with the quadric cone. The 240 $(-1)$ curves are obtained as components of the double cover of a conic cut out by a {\em tri-tangent plane} - which is a plane which meets the branched sextic at three points of tangency. The two $(-1)$ curves lying over the conic meet at three points lying over these points of tangency. 
		
		From van Luijk and Winter \cite{wiva}, Lemma 2.7 one can see that a $(-1)$ curve meets 183 other curves at least at $1$ point and those can be used to construct motivic cycles.

		\subsection{Degree 2}
		This is the case studied by Sato \cite{satodelpezzo}. The $K3$ surface is double cover of the degree $2$ del Pezzo branched $X_2$ at a curve $C$. The curve $C$ itself is the branch locus of the canonical map induced by $-2K$ to $\CP^2$ and is a quartic curve in $\CP^2$. 
		
		It is a well known classical fact that a quartic curve in $\CP^2$ has $28$ bi-tangents. The $56$  $(-1)$ curves in $X_2$ are the curves lying over these bi-tangents. 
		
	    The $(-1)$ curve $D$ we use to construct the cycle lies over a line $Q$ in $\CP^2$ which is a bi-tangent to the quartic curve $C$. In general there will be two points of bi-tangency on the quartic. The two points lying over these bi-tangents are the points $s_Q$ and $t_Q$. 
	    
	    When the two bi-tangents coincide it is called a point of inflection. When this happens it means that the corresponding del Pezzo corresponds to a point of $H_Q$  and the cycle $\Xi_{Q,P}$ is not defined. 
	    
	    For a given bi-tangent $Q$ in $\CP^2$ there are two curves in $X_2$. These two curves $D_1$ and $D_2$ meet at two points. Recall that under the blow down map to $\CP^2$ the $(-1)$ curves map to one of four possibilities - a point - in which case the curve is an exceptional curve, a line through two of the singular points, a conic through five of the singular points or a cubic through seven points, one of which is a double point. A little analysis shows that the two curves lying over a bi-tangent are either a pair of an exceptional curve and a cubic or a line and a conic. Further, these pairs do not meet at any of the exceptional points $P$, they meet at the points lying over the points of bi-tangency. 
	    
	    In his construction Sato uses two bi-tangent lines to build the cycle. In fact, looking at his construction more carefully shows that  he uses two $(-1)$ curves in the del Pezzo which meet at two points which do not come from the same bi-tangent. So one has that the cycles we have are essentially those he constructs. His proof of indecomposability is quite different though - he computes the real regulator and shows it is non-trivial by evaluating it at the $X_2$ which is the double cover of $\CP^2$ branched at the Fermat quartic. 
	    
	    The locus where the cycle degenerates is when the two points $s_{D_1}$ and $t_{D_1}$ coincide. In terms of the quadrics this is when the bi-tangents is an infection bi-tangent - namely those curves where the bi-tangent meets at a point of multiplicity four.

	    \subsection{Degree 3} 
	    
	    In this case the del Pezzo is a cubic surface and the $(-1)$ curves are the $27$ lines on the cubic surface. There are several ways of viewing the cubic surface as the blow up of $\CP^2$ and there is a lot of classical geometry behind it. A pair of six lines satisfying some conditions forms what is called a {\em double-six}. This determines a quadric surface called a {\em Schur quadric} as well as a blow-down map to $\CP^2$ where one of the pairs of six lines are the exceptional curves. 
	       
	     The intersection of this quadric surface with the cubic surface determines a genus $4$ curve $B$ and the $K3$ surface is the double cover of the cubic surface branched at $B$. The blow down of $B$ is a nodal sextic curve  $S$ in $\CP^2$ and the $K3$ surface can also be considered as the normalization of the double cover of $\CP^2$ branched at $S$.

	     \subsection{Rational curves on del Pezzo surfaces} 
	     
	     Much of the construction does not use the fact that the curves are $(-1)$ curves but uses the fact that they are rational curves which in general meet the branch locus at double points - except at two points. The degeneracy locus is then where the two points coincide. In the case of cubics it is known by the work of Itzykson \cite{itzy} and more generally, Gottsche-Pandharipande \cite{gopa}, that given a class  ${\bf a}=(a_0,a_1,\dots,a_{9-d})$ in the Neron-Severi group of a del Pezzo of degree $d$ there exists a finitely many rational curves $C$ of class ${\bf a}$ passing through $3a_0-\sum_i a_i-1$ points for certain ${\bf a}$. The theorems are the generalization of the statement that there exists finitely many  rational curves of degree $\delta$ in $\CP^2$ passing through $3\delta-1$ points.
	     
	     The number $3a_0-\sum_i a_i$ is $\frac{1}{2}({\bf a},-2K_X)$ and so this statement can be used to say that for those ${\bf a}$ there is a rational curve meeting the branch locus $C$ in $X_d$ at $3a_0-\sum_i a_i-1$ double points and two other points. Using this rational curve in our construction along with a $(-1)$ curve which it meets one can construct a motivic cycle  which is defined outside the locus where the two points coincide - and this cycle is indecomposable. The case of $(-1)$ curves is when $3a_0-\sum a_i=1$.


	  \bibliographystyle{alpha}
	  \bibliography{AlgebraicCycles.bib}

\end{document}